\newtheorem{theorem}{Theorem}
\newtheorem{lemma}[theorem]{Lemma}
\newtheorem{corollary}[theorem]{Corollary}
\begin{document}
\onehalfspace

\title{Revisiting Extremal Graphs Having No Stable Cutsets}
\author{
Johannes Rauch\and 
Dieter Rautenbach}
\date{}

\maketitle
\vspace{-10mm}
\begin{center}
Ulm University, Institute of Optimization and Operations Research\\ 
89081 Ulm, Germany\\
\texttt{$\{$johannes.rauch,dieter.rautenbach$\}$@uni-ulm.de}
\end{center}

\begin{abstract}
Confirming a conjecture posed by Caro,
it was shown by Chen and Yu 
that every graph $G$ with $n$ vertices and at most $2n-4$ edges
has a stable cutset,
which is a stable set of vertices whose removal disconnects the graph.
Le and Pfender showed that all graphs with $n$ vertices and $2n-3$ edges 
without stable cutset
arise recursively glueing together triangles and triangular prisms 
along an edge or triangle.
Le and Pfender's proof contains a gap, 
which we fill in the present article.\\[3mm]
{\bf Keywords}: independent cut; stable cutset
\end{abstract}

\bigskip

\bigskip

\bigskip

\bigskip

\section{Introduction}\label{sec1}

We consider only finite, simple, and undirected graphs and 
refer to \cite{lepf} for further notational details.
A {\it stable cutset} in a graph $G$ is a stable set $S$ of vertices of $G$
for which $G-S$ is disconnected.
By an elegant inductive argument,
Chen and Yu \cite{chyu} showed the following result
confirming a conjecture by Caro.

\begin{theorem}[Chen and Yu \cite{chyu}]\label{theoremchyu}
If $G$ is a graph with $n$ vertices and at most $2n-4$ edges, 
then $G$ contains a stable cutset.
\end{theorem}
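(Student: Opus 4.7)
The plan is to proceed by induction on $n$. When $n\le 3$, the hypothesis forces $G$ to have at most $2$ edges and hence to be either disconnected (so $\emptyset$ is a stable cutset) or the path $P_3$ (whose middle vertex is a singleton stable cutset). So assume $n\ge 4$, let the result hold for all smaller orders, and let $G$ have $n$ vertices and $m\le 2n-4$ edges. Since a disconnected graph has $\emptyset$ as a stable cutset, we may assume $G$ is connected.

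I would first dispose of vertices of small degree. If some vertex $v$ has degree at most~$1$, then its unique neighbour (or any other vertex if $v$ is isolated) forms a singleton stable cutset, since $n\ge 4$ leaves $v$ in a component by itself. If some vertex $v$ of degree~$2$ has non-adjacent neighbours $u_1,u_2$, then $\{u_1,u_2\}$ is a stable set whose removal isolates $v$ from the non-empty remaining graph, hence is a stable cutset. After these reductions we may assume the minimum degree of $G$ is at least~$2$ and that every degree-$2$ vertex lies in a triangle with its two neighbours.

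The heart of the argument is a removal-and-induction step. If some vertex $v$ has degree~$2$ with adjacent neighbours $u_1,u_2$, then $G-v$ has $n-1$ vertices and at most $2(n-1)-4$ edges, so by induction $G-v$ has a stable cutset $S$. Since $u_1u_2\in E(G)$, $S$ contains at most one of $u_1,u_2$, so $S\cup\{v\}$ is independent, and one argues that either $S$ or $S\cup\{v\}$ disconnects $G$. Otherwise the minimum degree is at least~$3$, and from $3n/2\le m\le 2n-4$ one gets $n\ge 8$. I would then pick a vertex $v$ of minimum degree, apply induction to $G-v$ (which has $n-1$ vertices and at most $2n-7\le 2(n-1)-4$ edges) to obtain a stable cutset $S$, and attempt to lift $S$ to a stable cutset of $G$.

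The main obstacle is precisely this lifting step: the neighbours of $v$ may jointly bridge all components of $(G-v)-S$, so that neither $S$ nor $S\cup\{v\}$ is a cutset of $G$. Overcoming this requires a delicate local modification of $S$ near $v$---for instance, swapping a vertex of $S$ for a carefully chosen nearby vertex, or rerouting the cut through a neighbour of $v$---while exploiting the sparsity $m\le 2n-4$ to guarantee that such a modification exists. The case analysis on how $v$ attaches to the components of $(G-v)-S$ is, I expect, the combinatorial core of the argument and the hardest part to make fully rigorous.
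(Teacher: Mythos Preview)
The paper does not prove Theorem~\ref{theoremchyu}; it quotes the result from Chen and Yu~\cite{chyu} and uses it as a black box, so there is no proof here to compare your attempt against.

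That said, your outline has a genuine gap, and you have correctly located it. In the $\delta(G)\ge 3$ branch, deleting a minimum-degree vertex $v$ and trying to lift a stable cutset $S$ of $G-v$ to $G$ can fail in a way that admits no uniform local repair: if $N_G(v)=\{a,b,c\}$ with, say, $a\in S$ while $b$ and $c$ lie in distinct components of $(G-v)-S$, then $S$ is no longer a cutset of $G$ (the vertex $v$ bridges $b$ and $c$), the set $S\cup\{v\}$ is not stable (because $va\in E(G)$), and replacing $a$ in $S$ by $v$ may destroy the cut altogether, since $a$ could be the sole vertex of $S$ separating two other components. Your ``delicate local modification'' is at this point a hope, not an argument, and Chen and Yu's inductive step is organised differently from a bare vertex deletion. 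There is also a small slip earlier: in the degree-$2$ case with adjacent neighbours $u_1,u_2$, if $S$ contains one of them then $S\cup\{v\}$ is \emph{not} independent. That case is nonetheless salvageable, but for a different reason: $S$ itself already disconnects $G$, because $v$ has at most one neighbour outside $S$ and therefore attaches to a single component of $(G-v)-S$.
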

Le and Pfender \cite{lepf} gave an elegant structural characterization 
of the graphs $G$ of order $n$ with $2n-3$ edges
that do not contain a stable cutset, 
cf.~Theorem \ref{theoremlepf} below.
Our present goal is to fill a gap 
in the original proof given by Le and Pfender for Theorem \ref{theoremlepf}. 
Stable cutsets were considered in a number of publications
concerning structural refinements,
algorithmic complexity, 
tractable cases, 
fixed parameter tractability, and their relation to perfect graphs
\cite{BrDrLeSz,MR1936948,ch,cofo,klfi,le_mosca_mueller_2008,krle,LeRa,MaOsRa,raraso,tu}.

\section{Graphs $G$ with $2n(G)-3$ edges and no stable cutset}

We recall definitions and results from \cite{lepf},
formulate the main result Theorem \ref{theoremlepf},
and provide a proof, 
in which we also explain the gap in the original proof.

If $G$ and $H$ are two graphs 
and $G\cap H$ is isomorphic to $K_2$ or $K_3$,
then $G\cup H$ is said to arise from $G$ and $H$ 
by an {\it edge identification} or a {\it triangle identification}, 
respectively.
Le and Pfender \cite{lepf} 
define the class ${\cal G}_{sc}$ of graphs recursively as follows:
\begin{itemize}
\item $K_3,\overline{C_6}\in {\cal G}_{sc}$.
\item If $G,H\in {\cal G}_{sc}$ and $G\cap H$ is isomorphic to $K_2$ or $K_3$,
then $G\cup H\in {\cal G}_{sc}$.
\end{itemize}
They already observe that 
$H$ may be restricted to $\left\{ K_3,\overline{C_6}\right\}$
without changing ${\cal G}_{sc}$.

For a positive integer $k$, let $[k]$ denote the set of positive integers at most $k$.

A {\it generating sequence} for a graph $G$ is a sequence $(G_1,\ldots,G_k)$ 
for some positive integer $k$ such that
\begin{itemize}
\item for every $i\in [k]$,
the graph $G_i$ is isomorphic to $K_3$ or $\overline{C_6}$,
\item for every $i\in [k-1]$,
the graph $G_{\leq i}\cap G_{i+1}$ 
is isomorphic to $K_2$ or $K_3$,
where $G_{\leq i}=G_1\cup \ldots \cup G_i$, and
\item $G=G_{\leq k}$.
\end{itemize}
A simple inductive argument using the recursive definition of ${\cal G}_{sc}$
implies that every graph with a generating sequence belongs to ${\cal G}_{sc}$.
Le and Pfender's mentioned observation is that every graph in ${\cal G}_{sc}$
has a generating sequence.
We need the following strengthening of this statement.

\begin{lemma}\label{lemma1}
Every graph $G$ in ${\cal G}_{sc}$ has a generating sequence $(G_1,\ldots,G_k)$.
Furthermore, for every $i\in [k]$, 
the graph $G$ has a generating sequence $(H_1,\ldots,H_k)$ with $H_1=G_i$.
\end{lemma}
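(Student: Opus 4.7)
The plan is to induct on the length $k$ of the generating sequence, driven by an auxiliary structural claim: for any generating sequence $(G_1,\ldots,G_k)$, every edge and every triangle of $G_{\leq k}$ is a subgraph of some single $G_j$, $j\in[k]$. For edges this is immediate from $E(G_{\leq k})=\bigcup_{j\in[k]}E(G_j)$. For triangles $T$ I induct on $k$: if some vertex of $T$ lies in $V(G_k)\setminus V(G_{\leq k-1})$, its two incident edges in $T$ force $V(T)\subseteq V(G_k)$, and since every edge of $G_k$ whose endpoints both lie in $V(G_{\leq k-1})\cap V(G_k)$ already belongs to $S:=G_{\leq k-1}\cap G_k$ (which is isomorphic to $K_2$ or $K_3$), one concludes $T\subseteq G_k$; otherwise $V(T)\subseteq V(G_{\leq k-1})$, and any edge of $T$ in $E(G_k)\setminus E(G_{\leq k-1})$ would have both endpoints in $V(S)$ and hence already lie in $E(S)\subseteq E(G_{\leq k-1})$, a contradiction, whence $T\subseteq G_{\leq k-1}$ and the induction hypothesis applies.

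For the existence statement (Le and Pfender's observation), induct on the recursive definition of ${\cal G}_{sc}$: in the gluing case $G=A\cup B$ with $A\cap B$ isomorphic to $K_2$ or $K_3$, the auxiliary claim identifies some constituent of $B$'s sequence containing $A\cap B$, and the strengthening applied to $B$'s shorter sequence brings that constituent to the front, so concatenating with $A$'s sequence yields a generating sequence of $G$. For the strengthening, I induct on $k$: the base $k=1$ is trivial; in the step with sequence $(G_1,\ldots,G_k)$ and target index $i$, if $i\leq k-1$ apply the inductive hypothesis to $(G_1,\ldots,G_{k-1})$ to reorder it to start with $G_i$, and append $G_k$. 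The last gluing is unchanged, so the result is a generating sequence of $G$ starting with $G_i$.

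The main obstacle is the case $i=k$, where $G_k$ must be moved to the front. The auxiliary claim produces $j\in[k-1]$ with $S\subseteq G_j$; by the inductive hypothesis, take a generating sequence $(H_1',\ldots,H_{k-1}')$ of $G_{\leq k-1}$ with $H_1'=G_j$, and prepend $G_k$ to obtain $(G_k,H_1',H_2',\ldots,H_{k-1}')$. The $t=2$ gluing is $G_k\cap H_1'=G_k\cap G_j=S$, which is $K_2$ or $K_3$. For $t\geq3$ one verifies $\bigl(G_k\cup H_1'\cup\cdots\cup H_{t-2}'\bigr)\cap H_{t-1}'=\bigl(H_1'\cup\cdots\cup H_{t-2}'\bigr)\cap H_{t-1}'$, since $G_k\cap H_{t-1}'\subseteq G_k\cap G_{\leq k-1}=S\subseteq H_1'$ is already absorbed into the preceding union; so the new gluing coincides with the original one in the $H'$-sequence. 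The crux is the auxiliary claim---that triangles cannot be ``spread across'' several $G_j$'s---which is the structural fact whose neglect likely accounts for the gap in Le and Pfender's proof.
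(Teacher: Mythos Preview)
Your argument is correct. The paper's proof and yours share the same pivotal observation---that the glueing clique $S=G_{\leq k-1}\cap G_k$ lies entirely inside a single constituent $G_j$---but organize the induction differently. The paper inducts on the order of $G$ via the recursive decomposition $G=G^{(1)}\cup G^{(2)}$ and proves existence and the reordering statement simultaneously, each time concatenating the two sub-sequences after bringing the right constituent to the front. You instead isolate the reordering statement and prove it by induction on the sequence length $k$, peeling off the last term; the case $i=k$ is handled by your auxiliary claim plus the verification that prepending $G_k$ leaves all later glueings unchanged because $G_k\cap H'_{t-1}\subseteq S\subseteq H'_1$. This makes the reordering a self-contained lemma about generating sequences, independent of the recursive definition of ${\cal G}_{sc}$, which is a clean separation. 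In return, the paper's single induction is more compact. Where the paper writes that the existence of an index $i$ with $G^{(1)}\cap G^{(2)}\subseteq G^{(2)}_i$ ``follows immediately from the definition of ${\cal G}_{sc}$'', you supply an explicit proof via your auxiliary claim on triangles, which is a genuine addition of detail.

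One remark: your closing speculation that neglect of this triangle-containment fact ``likely accounts for the gap in Le and Pfender's proof'' is off target. The gap the present paper repairs is not in this lemma at all; it is in the proof of Claim~14 of the main theorem, where Le and Pfender incorrectly infer from the absence of a $3$-edge matching cut in $G'$ that $G'$ must be a $2$-tree (built only from triangles), overlooking that intermediate $\overline{C_6}$-glueings can create matching cuts that are later destroyed by further identifications.
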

\begin{proof}
The proof is by induction on the order of $G$ 
using the original recursive definition of ${\cal G}_{sc}$.
If $G$ is isomorphic to $K_3$ or $\overline{C_6}$,
then $(G_1)=(G)$ is a generating sequence for $G$,
and the second statement is trivially true.

Now, let $G=G^{(1)}\cup G^{(2)}$ be such that 
$G^{(1)},G^{(2)}\in {\cal G}_{sc}$ are proper subgraphs of $G$
and $G^{(1)}\cap G^{(2)}$ is isomorphic to $K_2$ or $K_3$,
that is, the two graphs $G^{(1)}$ and $G^{(2)}$
share exactly two or three vertices that form a clique in both graphs.
By induction,
the graph $G^{(1)}$ has a generating sequence $\left(G^{(1)}_1,\ldots,G^{(1)}_k\right)$
and 
the graph $G^{(2)}$ has a generating sequence $\left(G^{(2)}_1,\ldots,G^{(2)}_\ell\right)$.
Let $i\in [\ell]$ be such that 
the edge or triangle $G^{(1)}\cap G^{(2)}$ is a subgraph of $G^{(2)}_i$;
the existence of such an index $i$ 
follows immediately from the definition of ${\cal G}_{sc}$.
By the second statement,
the graph $G^{(2)}$ has a generating sequence $\left(H^{(2)}_1,\ldots,H^{(2)}_\ell\right)$ 
with $H^{(2)}_1=G^{(2)}_i$.
Now, the sequence $\left(G^{(1)}_1,\ldots,G^{(1)}_k,H^{(2)}_1,\ldots,H^{(2)}_\ell\right)$ 
is a generating sequence $(G_1,\ldots,G_{k+\ell})$ for $G$.

For the second statement,
it remains to show that, for every $j\in [k+\ell]$,
the graph $G$ has a generating sequence starting with $G_j$.
If $j\in [k]$, then, 
by the second statement,
the graph $G^{(1)}$ has a generating sequence $\left(H^{(1)}_1,\ldots,H^{(1)}_k\right)$ 
with $H^{(1)}_1=G^{(1)}_j=G_j$,
and the sequence
$\left(H^{(1)}_1,\ldots,H^{(1)}_k,H^{(2)}_1,\ldots,H^{(2)}_\ell\right)$
is a generating sequence for $G$ starting with $G_j$.
Now, let $j\in [k+\ell]\setminus [k]$.
By the second statement,
the graph $G^{(2)}$ has a generating sequence $\left(I^{(2)}_1,\ldots,I^{(2)}_\ell\right)$ 
with $I^{(2)}_1=H^{(2)}_{j-k}=G_j$.
Similarly as above, 
there is some $i\in [\ell]$ such that 
the edge or triangle $G^{(1)}\cap G^{(2)}$ is a subgraph of $G^{(1)}_i$.
By the second statement,
the graph $G^{(1)}$ has a generating sequence $\left(H^{(1)}_1,\ldots,H^{(1)}_k\right)$ 
with $H^{(1)}_1=G^{(1)}_i$.
Now, the sequence
$\left(I^{(2)}_1,\ldots,I^{(2)}_\ell,H^{(1)}_1,\ldots,H^{(1)}_k\right)$
is a generating sequence for $G$ starting with $G_j$.
This completes the proof.
\end{proof}
From the main result of Chen and Yu \cite{chyu},
Le and Pfender \cite{lepf} deduce the following.
\begin{corollary}[Le and Pfender, Corollary 3 in \cite{lepf}]\label{corollary1}
If $G$ is a graph of order $n$ with at most $2n-4$ edges and 
$x$ is not the only cut vertex in $G$,
then $G$ has a stable cutset not containing $x$.
\end{corollary}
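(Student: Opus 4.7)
I would split the proof into two cases according to whether $G$ has a cut vertex distinct from $x$. If $G$ has a cut vertex $y\neq x$, then $\{y\}$ is a singleton, hence trivially a stable set, and its removal disconnects $G$, so $\{y\}$ is itself a stable cutset of $G$ avoiding $x$, and there is nothing else to prove.

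In the remaining case, no vertex other than $x$ is a cut vertex of $G$. Together with the hypothesis that $x$ is not the only cut vertex of $G$, this forces $G$ to be without any cut vertex at all. Since Theorem~\ref{theoremchyu} yields a stable cutset of $G$, we must have $n\geq 3$, so $G$ is $2$-connected and in particular $\deg_G(x)\geq 2$. Here I would apply Theorem~\ref{theoremchyu} to $G-x$: this graph is connected with $n-1$ vertices and at most $e(G)-\deg_G(x)\leq 2(n-1)-4$ edges, so it has a stable cutset $S\subseteq V(G)\setminus\{x\}$. The set $S$ is also stable in $G$, and the components of $G-S$ are obtained from those of $(G-x)-S$ by appending $x$ together with its edges to $N_G(x)\setminus S$. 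Thus $S$ is already a stable cutset of $G$ avoiding $x$ whenever $N_G(x)\setminus S$ meets at most one component of $(G-x)-S$.

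\textbf{The main obstacle} is ruling out the case in which, for every choice of $S$, the set $N_G(x)\setminus S$ meets at least two components of $(G-x)-S$; then re-attaching $x$ may merge those components and destroy the cut. To handle this, I would select $S$ among all stable cutsets of $G-x$ so as to minimize the number of components of $(G-x)-S$ that intersect $N_G(x)$, and then argue this minimum is at most $1$: if it were at least $2$, a suitable swap exchanging a vertex of $S$ for a neighbor of $x$ in one of the ``bad'' components should produce a stable cutset with strictly fewer bad components, contradicting minimality. Should the exchange argument turn out to be too fragile, a viable fallback is induction on $n$ using a low-degree vertex $v\neq x$ (which exists because $e(G)<2n$ forces average degree less than $4$): if $v$ has degree $2$ with non-adjacent neighbors $a,b$, neither of which is $x$, then $\{a,b\}$ is already a stable cutset avoiding $x$; in the remaining subcases one suppresses or deletes $v$ and invokes the corollary on the resulting smaller graph, where the main care is to preserve the hypothesis ``$x$ is not the only cut vertex'' under the reduction.
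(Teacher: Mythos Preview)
The paper itself does not prove this corollary; it simply quotes it from Le and Pfender~\cite{lepf}, so there is no in-paper argument to compare against. Your opening case split is correct, and passing to $G-x$ and invoking Theorem~\ref{theoremchyu} is a natural first move. The difficulty is exactly where you locate it, but neither of your two proposed resolutions works as stated.

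Take $G=K_{2,n-2}$ with bipartition $\{x,y\}\cup\{\ell_1,\dots,\ell_{n-2}\}$ and $n\geq 4$. Then $G$ is $2$-connected with exactly $2n-4$ edges. The graph $G-x$ is a star with centre $y$, and its \emph{only} stable cutset is $\{y\}$; every component of $(G-x)-\{y\}$ is a single leaf lying in $N_G(x)$. There is no vertex to swap, so the minimum number of ``bad'' components is $n-2\geq 2$, directly refuting the claim that a minimizing $S$ has at most one such component. The same example also breaks the induction fallback as you phrase it: every low-degree vertex $v\neq x$ is a leaf with neighbours $\{x,y\}$, so your clause ``neither of which is $x$'' never applies; suppressing $v$ produces a graph with $2(n-1)-3$ edges (too many for the inductive hypothesis), while deleting $v$ yields $K_{2,n-3}$, whose only stable cutset avoiding $x$ is its full set of leaves, and that set is \emph{not} a cutset of $G$ because $v$ reconnects $x$ and $y$. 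Some genuinely new ingredient is needed---for instance, augmenting the inherited cutset by $v$, or (closer to what Le and Pfender actually do) rerunning the Chen--Yu induction while carrying the distinguished vertex $x$ through every reduction step---and your sketch does not supply it.
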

The following is the main result from \cite{lepf}.

\begin{theorem}[Le and Pfender, Theorem 5 in \cite{lepf}]\label{theoremlepf}
If $G$ is a graph of order $n$ with at most $2n-3$ edges, 
then $G$ has a stable cutset or belongs to ${\cal G}_{sc}$.
\end{theorem}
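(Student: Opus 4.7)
The plan is to argue by induction on $n = n(G)$. Because Theorem~\ref{theoremchyu} already disposes of the case $|E(G)| \leq 2n - 4$, I restrict attention to the extremal case $|E(G)| = 2n - 3$ throughout, with the base case $n = 3$ handled by $G = K_3 \in \mathcal{G}_{sc}$.

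First I would deal with low minimum degree. Isolated or pendant vertices yield an immediate stable cutset. If some vertex $v$ has $\deg(v) = 2$ with neighbors $a, b$, then either $ab \notin E(G)$, in which case $\{a, b\}$ is a stable cutset isolating $v$, or $ab \in E(G)$, in which case I peel off $v$ by setting $G' = G - v$. The graph $G'$ has $n - 1$ vertices and $2(n-1) - 3$ edges, and any stable cutset $S$ of $G'$ remains one in $G$, since $S$ being stable forces at least one of $a, b$ to lie outside $S$, and reinserting $v$ only extends a single existing component of $G' - S$. By induction $G' \in \mathcal{G}_{sc}$, and $G = G' \cup K_3$ glued along $ab$ lies in $\mathcal{G}_{sc}$ as well.

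The main case is $\delta(G) \geq 3$. Here $4n - 6 = \sum_v \deg(v) \geq 3n$ forces $n \geq 6$ and at least six vertices of degree exactly $3$. Since no $K_3$ can be peeled off when $\delta \geq 3$ (the outer vertex of a peeled triangle must have degree $2$), the plan becomes to peel off a copy of $\overline{C_6}$ instead. Starting from a degree-$3$ vertex $v$ with neighbors $a, b, c$ and analyzing $G[N(v)]$ together with the second neighborhood of $v$, I would argue that either a small stable set is already a cutset (for example $\{a, b, c\}$ itself if $N(v)$ is independent) or one can locate six vertices $V'$ inducing $\overline{C_6}$ whose three or four "outer" vertices have no incidences outside $V'$. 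Peeling those outer vertices off leaves $G^* := G - \mathrm{outer}(V')$ with $G = G^* \cup \overline{C_6}$ glued along a triangle or edge, and induction on $G^*$ closes the case.

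The main obstacle, and, I expect, the exact point of the gap in Le and Pfender's proof, is the verification that $G^*$ still has no stable cutset. A stable cutset $S$ of $G^*$ need not lift to one in $G$, since the reinserted prism vertices can reconnect components of $G^* - S$ through the shared clique; conversely, a separating set of $G$ may cease to be stable in $G^*$ if it uses vertices of that clique. Controlling this requires a careful case analysis of how $S$ meets the shared $K_2$ or $K_3$, and crucially uses the strengthening in Lemma~\ref{lemma1} that any prescribed piece can serve as the first element of a generating sequence, so that the two triangles of the peeled prism can be made to play interchangeable roles. Corollary~\ref{corollary1} would additionally be invoked to eliminate the most subtle subcase, in which $G^*$ contains a unique cut vertex.
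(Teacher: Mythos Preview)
Your outline diverges substantially from the paper and contains a genuine gap at the crucial step.

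The paper does not peel off a copy of $\overline{C_6}$. In the minimal counterexample $G$ (with Claims~6--13 already in hand), the delicate part is Claim~14, and its proof proceeds by \emph{vertex identification}, not deletion: two nonadjacent vertices $y_0$ and $z_1$ are contracted to a single vertex $v$, yielding a smaller graph $G'$ that, by minimality, lies in $\mathcal{G}_{sc}$. Lemma~\ref{lemma1} is then invoked to choose a generating sequence of $G'$ \emph{starting} with the triangle $xvz_0$, and a long analysis of how $y_1$ enters this sequence produces a stable cutset of $G$, a contradiction. The gap in Le and Pfender's original argument is not about lifting stable cutsets through a peeling operation; it is that from ``$G'$ has no $3$-edge matching cut'' they inferred that $G'$ is a $2$-tree, which is false because prism pieces can appear early in a generating sequence and have their matching cuts destroyed by later identifications.

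Your approach, by contrast, asserts that from a degree-$3$ vertex one can always locate six vertices inducing $\overline{C_6}$ whose outer vertices have no further incidences. You offer no argument for this, and none is apparent: the existence of such a peelable prism is essentially equivalent to what you are trying to prove, namely that $G$ already has $\mathcal{G}_{sc}$-structure. The local analysis of $G[N(v)]$ and the second neighborhood does not by itself force a clean $\overline{C_6}$; indeed, the paper needs all of Claims~6--13 plus the contraction trick and the refined sequencing of Lemma~\ref{lemma1} before anything resembling a prism emerges (at Claim~14c, where $p=4$). Your proposed uses of Lemma~\ref{lemma1} (making the two prism triangles interchangeable) and Corollary~\ref{corollary1} (handling a unique cut vertex of the peeled graph) do not match how these tools are actually deployed, and the obstacle you name --- that a stable cutset of $G^*$ need not lift to $G$ --- does not even arise in the paper's argument, since nothing is deleted.
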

\begin{proof} 
For a proof by contradiction as in \cite{lepf},
we assume that $G$ is a counterexample of minimum order $n$.
The following properties of $G$ are deduced in \cite{lepf},
where we use the same numbering of the claims as in \cite{lepf}:\\[3mm]
{\bf Claim 6.} {\it $G$ has exactly $2n-3$ edges.}\\[3mm]
{\bf Claim 7.} {\it Every vertex of $G$ lies in a triangle.}\\[3mm]
{\bf Claim 8.} {\it $G$ contains no $K_2$-cutset or $K_3$-cutset.}\\[3mm]
{\bf Claim 9.} {\it $G$ is $3$-connected.}\\[3mm]
{\bf Claim 10.} {\it $G$ contains no $3$-edge matching cut.}\\[3mm]
{\bf Claim 11.} {\it $G$ contains no $K_4^-$.}\\[3mm]
{\bf Claim 12.} {\it For every two non-adjacent vertices $x$ and $y$, 
we have $|N_G(x)\cap N_G(y)|\leq 2$.}\\[3mm]
{\bf Claim 13.} {\it $G$ contains no $P_3$-cutset.}

\bigskip

The gap in the argument lies in the proof of the following claim.\\[3mm]
{\bf Claim 14.} {\it In every triangle, at least two vertices belong to other triangles as well.}
\begin{proof}[Proof of Claim 14.]
For a proof by contradiction, we assume that $xy_0z_0$ is a triangle in $G$
and that $y_0$ and $z_0$ lie in no other triangles in $G$.
Since $(N_G(y_0)\cup N_G(z_0))\setminus \{ y_0,z_0\}$ is not a stable cutset,
there are neighbors $y_1$ of $y_0$ and $z_1$ of $z_0$ 
such that $y_1$ and $z_1$ are adjacent.
By Claim 12, the vertices $y_1$ and $z_0$ are the only common neighbors 
of $y_0$ and $z_1$.
Let the graph $G'$ arise from $G$ by identifying the vertices $y_0$ and $z_1$ 
to form the vertex $v$.
The order of $G'$ is $n-1$ and its size is $2(n-1)-3$.
Since every stable cutset in $G'$ is also a stable cutset in $G$,
it follows that $G'$ has no stable cutset.
Now, the choice of $G$ implies that $G'\in {\cal G}_{sc}$.

\begin{quote}
{\it Explanation of the gap:\\
At this point, Le and Pfender correctly show that $G'$ does not contain a $3$-edge matching cut.
From that they incorrectly deduce that $G'$ can be built by starting with a triangle 
and recursively glueing on triangles along an edge,
that is, that $G'$ is a so-called $2$-tree.
Clearly, edge or triangle identifications with copies of $\overline{C_6}$
during the construction of $G'$ create $3$-edge matching cuts in intermediate graphs.
Nevertheless, subsequent further identifications in the construction of $G'$ 
can eliminate these cuts.}
\end{quote}

By Lemma \ref{lemma1}, the graph $G'$ 
has a generating sequence ${\cal S}=(G'_1,\ldots,G'_\ell)$ 
such that $G'_1$ contains the triangle $xvz_0$.
Possibly by inserting the triangle $xvz_0$ within ${\cal S}$ before $G'_1$,
we may assume that $G'_1$ equals the triangle $xvz_0$.
If $G'_{i+1}$ is isomorphic to $\overline{C_6}$
and $G'_{\leq i}\cap G'_{i+1}$ is isomorphic to $K_2$,
then we may assume that edge $ab$ common to $G'_{\leq i}$ and $G'_{i+1}$
belongs to the $3$-edge matching cut of $G'_{i+1}$;
otherwise, the edge $ab$ belongs to a triangle $abc$ in $G'_{i+1}$
with $c\not\in V(G'_{\leq i})$,
and we can replace $G'_{i+1}$ within the generating sequence ${\cal S}$
by $abc,G'_{i+1}$,
that is, we consider the alternative generating sequence 
$(G'_1,\ldots,G'_{i},abc,G'_{i+1},\ldots,G'_\ell)$, 
where we first form 
an edge identification along $ab$ with the triangle $abc$ and then 
a triangle identification along $abc$ with $G'_{i+1}$. 
Subject to these restrictions, 
we assume that the generating sequence ${\cal S}$
is chosen in such a way that the smallest index $k$ 
with $y_1\in V(G'_k)$ is as small as possible.

By Claim 8, 
the vertex $v$ is involved in every edge or triangle identification within ${\cal S}$,
more precisely, the vertex $v$ belongs to each graph $G'_i$ within ${\cal S}$.\\[3mm]
{\bf Claim 14a.} {\it The graph $G'_{\leq k}$ does not have a stable set $X_k$ 
containing $y_1$ and $z_0$ and not containing $v$
such that,
if $v$ lies on a cycle $C$ in $G'_{\leq k}-X_k$, then, in the graph $G$, 
the two neighbors of $v$ on $C$
are adjacent to $z_1$ and non-adjacent to $y_0$.}
\begin{proof}[Proof of Claim 14a.]
For a proof by contradiction, we assume the existence of such a set $X_k$.
By an inductive argument along the generating sequence starting at $G'_{\leq k}$,
which is the first graph containing the two vertices $z_0$ and $y_1$,
we show that $X_k$ can be extended to sets 
$X_k\subseteq X_{k+1}\subseteq \ldots \subseteq X_\ell$
such that, for every $i\in [\ell]\setminus [k-1]$,
the set $X_i$ is a stable set in $G'_{\leq i}$,
contains $y_1$ and $z_0$,
does not contain $v$, and, 
if $v$ lies on a cycle $C$ in $G'_{\leq i}-X_i$, then, in the graph $G$, 
the two neighbors of $v$ on $C$
are adjacent to $z_1$ and non-adjacent to $y_0$.

For $i=k$, the statement is our assumption.

Now, let $i>k$.
If $G'_i$ is a triangle $vab$, where $v$ and $b$ belong to $G'_{\leq i-1}$, then 
$$X_i=
\begin{cases}
X_{i-1},&\mbox{ if $b\in X_{i-1}$ and}\\
X_{i-1}\cup \{ a\},&\mbox{ otherwise}
\end{cases}$$
has the desired properties.
If $G'_i$ is isomorphic to $\overline{C_6}$ with the two triangles
$uvw$ and $abc$ and the $3$-edge matching cut $\{au,bv,cw\}$,
where $v$ and $b$ belong to $G'_{\leq i-1}$, then 
$$X_i=
\begin{cases}
X_{i-1}\cup \{ w\},&\mbox{ if $b\in X_{i-1}$ and}\\
X_{i-1}\cup \{ a,w\},&\mbox{ otherwise}
\end{cases}$$
has the desired properties.
Finally, 
if $G'_i$ is isomorphic to $\overline{C_6}$ with the two triangles
$uvw$ and $abc$ and the $3$-edge matching cut $\{au,bv,cw\}$,
where $u$, $v$, and $w$ belong to $G'_{\leq i-1}$, then 
$X_i=X_{i-1}\cup \{ b\}$
has the desired properties.
Note that in the final case, 
if $X_{i-1}$ does not contain either $u$ or $w$,
then, in the graph $G$,
these two vertices are adjacent to $z_1$ and non-adjacent to $y_0$.
This completes the inductive argument.

Now, the set $X_\ell$ is also a stable set in the graph $G$
containing $y_1$ and $z_0$ and not containing $y_0$ and $z_1$.
Suppose, for a contradiction, 
that $y_0$ and $z_1$ lie in the same component of $G-X_\ell$.
Since the two common neighbors of $y_0$ and $z_1$ belong to $X_\ell$,
a path in $G-X_\ell$ between $y_0$ and $z_1$ has length at least three,
and the vertex $v$ lies on a cycle $C$ in $G'-X_\ell$
such that, in the graph $G$,
one of the two neighbors of $v$ on $C$ is adjacent to $y_0$
and 
the other one of the two neighbors of $v$ on $C$ is adjacent to $z_1$,
which is a contradiction. 
Hence, the set $X_\ell$ is a stable cutset in $G$,
which is a contradiction and completes the proof of the subclaim.
\end{proof}
If the vertex $z_0$ is involved in any edge identification within ${\cal S}$,
then, in the graph $G$,
the set $\{ y_0,z_0,z_1\}$ is a $P_3$-cutset,
contradicting Claim 13.
Hence, the vertex $z_0$ is not involved in any edge identification within ${\cal S}$.

Our next goal is to construct an induced path $P:y_1y_2\ldots y_{p-1}y_p$ in $G'_{\leq k}-v$
starting in the neighbor $y_1$ of $v$,
ending in $(y_{p-1},y_p)=(x,z_0)$, and 
containing all neighbors of $v$ in $G'_k$.
We construct this path inductively following the generating sequence
backwards from $G'_k$ down to $G'_1$
starting in $y_1$.
Suppose that, for some $i\in [k]\setminus \{ 1\}$, 
we have already constructed an induced path $y_1\ldots y_j$ 
starting in $y_1$ such that 
\begin{itemize}
\item the set $\{ y_1,\ldots,y_{j-1}\}$ 
is a subset of $V(G'_{\leq k})\setminus V(G'_{\leq i})$,
\item the set $\{ y_1,\ldots,y_{j-1}\}$ 
contains all neighbors of $v$ in $G'_{\leq k}$ that do not belong to $G'_{\leq i}$,
\item $y_j$ is a neighbor of $v$,
\item $y_j$ is the only vertex of $y_1\ldots y_j$ that belongs to $G'_i$, and
\item $y_j\not\in G'_{\leq i-1}$.
\end{itemize}
Initially, this holds for $i=k$ and $j=1$,
and the inductive construction is such that these properties are maintained.
If $G'_i$ is a triangle $vab$, where $v$ and $b$ belong to $G'_{\leq i-1}$
and $y_j=a$, then the choice of the generating sequence ${\cal S}$
implies that the vertex $b$ belongs to $G'_{i-1}$ but not to $G'_{\leq i-2}$,
where $G'_{\leq 0}$ is the empty graph.
Now, setting $y_{j+1}=b$ has the desired properties (for $i$ replaced by $i-1$).
If $G'_i$ is isomorphic to $\overline{C_6}$ with the two triangles
$uvw$ and $abc$ and the $3$-edge matching cut $\{au,bv,cw\}$,
where $v$ and $b$ belong to $G'_{\leq i-1}$
and $y_j=u$, then the choice of the generating sequence ${\cal S}$
implies that the vertex $b$ belongs to $G'_{i-1}$ but not to $G'_{\leq i-2}$.
Now, setting 
$y_{j+1}=w$,
$y_{j+2}=c$, and
$y_{j+3}=b$
has the desired properties (for $i$ replaced by $i-1$).
See Figure \ref{fig1} for an illustration.

\begin{figure}[H]
\centering
{\footnotesize 
\unitlength 0.9mm 
\linethickness{0.4pt}
\ifx\plotpoint\undefined\newsavebox{\plotpoint}\fi 
\begin{picture}(106,38)(0,0)
\put(40,8){\circle*{1}}
\put(60,8){\circle*{1}}
\put(80,8){\circle*{1}}
\put(40,28){\circle*{1}}
\put(60,28){\circle*{1}}
\put(80,28){\circle*{1}}
\put(40,28){\line(0,-1){20}}
\put(60,28){\line(0,-1){20}}
\put(80,28){\line(0,-1){20}}
\put(60,28){\line(-1,0){20}}
\put(80,28){\line(-1,0){20}}
\put(60,8){\line(-1,0){20}}
\put(80,8){\line(-1,0){20}}
\put(40,31){\makebox(0,0)[cc]{$v$}}
\put(96,28){\makebox(0,0)[cc]{$\ldots$}}
\put(100,28){\line(1,0){3}}
\put(103,28){\circle*{1}}
\put(106,28){\makebox(0,0)[lc]{$y_1$}}
\put(23,38){\makebox(0,0)[cc]{$G'_{\leq i-1}$}}
\put(60,37){\makebox(0,0)[cc]{$G'_i$}}
\put(23.5,18){\oval(45,34)[]}
\qbezier(40,28)(60,18)(80,28)
\qbezier(80,8)(60,18)(40,8)
\put(80,31){\makebox(0,0)[cc]{$y_j=u$}}
\put(80,5){\makebox(0,0)[cc]{$a$}}
\put(60,31){\makebox(0,0)[cc]{$y_{j+1}=w$}}
\put(60,5){\makebox(0,0)[cc]{$y_{j+2}=c$}}
\put(40,5){\makebox(0,0)[rc]{$y_{j+3}=b$}}
\put(56,18){\oval(68,32)[]}
\put(80,28){\line(1,0){12}}
\end{picture}}
\caption{Definition of $P$ for an edge identification with $\overline{C_6}$.}
\label{fig1}
\end{figure}
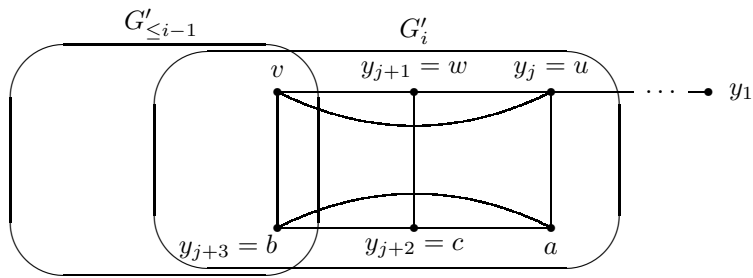
Finally, 
if $G'_i$ is isomorphic to $\overline{C_6}$ with the two triangles
$uvw$ and $abc$ and the $3$-edge matching cut $\{au,bv,cw\}$,
where $u$, $v$, and $w$ belong to $G'_{\leq i-1}$
and $y_j=b$,
then the choice of the generating sequence ${\cal S}$
implies that at least one of the two vertices $u$ and $w$, say $u$, 
belongs to $G'_{i-1}$ but not to $G'_{\leq i-2}$.
Now, setting 
$y_{j+1}=a$ and
$y_{j+2}=u$
has the desired properties (for $i$ replaced by $i-1$).
See Figure \ref{fig2} for an illustration.

\begin{figure}[H]
\centering
{\footnotesize 
\unitlength 0.9mm 
\linethickness{0.4pt}
\ifx\plotpoint\undefined\newsavebox{\plotpoint}\fi 
\begin{picture}(92,38)(0,0)
\put(40,8){\circle*{1}}
\put(60,8){\circle*{1}}
\put(40,18){\circle*{1}}
\put(60,18){\circle*{1}}
\put(40,28){\circle*{1}}
\put(60,28){\circle*{1}}
\put(40,28){\line(0,-1){20}}
\put(60,28){\line(0,-1){20}}
\qbezier(40,28)(50,18)(40,8)
\qbezier(60,28)(50,18)(60,8)
\put(60,28){\line(-1,0){20}}
\put(60,8){\line(-1,0){20}}
\put(60,18){\line(-1,0){20}}
\put(40,31){\makebox(0,0)[cc]{$v$}}
\put(60,31){\makebox(0,0)[cc]{$y_j=b$}}
\put(60,5){\makebox(0,0)[cc]{$c$}}
\put(39,18){\makebox(0,0)[rc]{$y_{j+2}=u$}}
\put(41,5){\makebox(0,0)[rc]{$y_{j+3}=w$}}
\put(61,18){\makebox(0,0)[lc]{$a=y_{j+1}$}}
\put(82,28){\makebox(0,0)[cc]{$\ldots$}}
\put(86,28){\line(1,0){3}}
\put(89,28){\circle*{1}}
\put(92,28){\makebox(0,0)[lc]{$y_1$}}
\put(23,38){\makebox(0,0)[cc]{$G'_{\leq i-1}$}}
\put(50,37){\makebox(0,0)[cc]{$G'_i$}}
\put(49.5,18){\oval(55,32)[]}
\put(23.5,18){\oval(45,34)[]}
\put(60,28){\line(1,0){19}}
\end{picture}}
\caption{Definition of $P$ for a triangle identification with $\overline{C_6}$.}\label{fig2}
\end{figure}
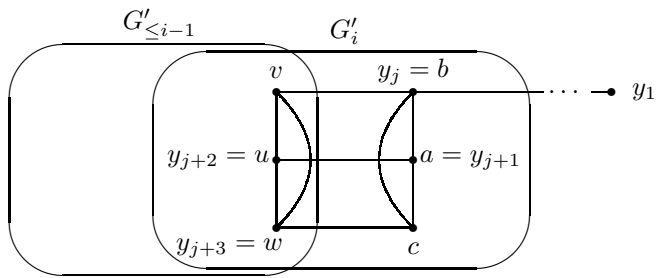
Note that in this final case, 
the next step of the construction of $P$ ensures $y_{j+3}=w$.
If $i=1$, then, since $z_0$ is not involved in any edge identification,
we may assume that $y_j=x$.
Now, setting $\ell=j+1$ and $y_\ell=z_0$ yields $P$ as desired.
This completes the construction of $P$.

\bigskip

If $p$ is odd, then let $X_k=\{ y_1,y_3,y_5,\ldots,y_p\}$.
By the construction of $P$, 
the set $X_k$ is stable,
contains $y_1$ and $y_p=z_0$, 
does not contain $v$, and 
intersects every triangle as well as every induced $C_4$ in $G'_k$ that contains $v$,
contradicting Claim 14a.
Hence, it follows that $p$ is even.

First, suppose that the generating sequence ${\cal S}'=(G'_1,\ldots,G'_k)$ 
of $G'_{\leq k}$
involves an edge identification with $\overline{C_6}$.
Let $i\in [k]$ be the largest index such that $G'_i$ is isomorphic to $\overline{C_6}$
and $G'_{\leq i-1}\cap G'_i$ is the edge $vb$.
Let $j$ be the smallest index with $y_j\in V(G'_i)$.
See Figure \ref{fig1} for an illustration.
If $j$ is odd, then let
$X_k=\{ y_1,y_3,\ldots,y_j\}\cup\{ y_{j+3},y_{j+5},\ldots,y_p\}$, and
if $j$ is even, then let
$X_k=\{ y_1,y_3,\ldots,y_{j+1}\}\cup \{ a\}\cup\{ y_{j+4},y_{j+6},\ldots,y_p\}$.
Again,
the set $X_k$ is stable,
contains $y_1$ and $y_p=z_0$, 
does not contain $v$, and 
intersects every triangle as well as every induced $C_4$ in $G'_k$ that contains $v$,
contradicting Claim 14a.
Hence, the generating sequence ${\cal S}'$ 
involves no edge identification with $\overline{C_6}$.

Next, suppose that the generating sequence ${\cal S}'=(G'_1,\ldots,G'_k)$ 
of $G'_{\leq k}$
involves a triangle identification with $\overline{C_6}$.
Let $i\in [k]$ be the largest index such that $G'_i$ is isomorphic to $\overline{C_6}$
and $G'_{\leq i-1}\cap G'_i$ is the triangle $uvw$.
Let $j$ be the smallest index with $y_j\in V(G'_i)$.
See Figure \ref{fig2} for an illustration.
Recall that $y_{j+3}=w$ in this case.
If $j$ is odd, then let
$X_k=\{ y_1,y_3,\ldots,y_j\}\cup\{ y_{j+3},y_{j+5},\ldots,y_p\}$, and
if $j$ is even, then let
$X_k=\{ y_1,y_3,\ldots,y_{j-1}\}\cup \{ c\}\cup\{ y_{j+2},y_{j+4},\ldots,y_p\}$.
Again,
the set $X_k$ is stable,
contains $y_1$ and $y_p=z_0$, 
does not contain $v$, and 
intersects every triangle as well as every induced $C_4$ in $G'_k$ that contains $v$,
contradicting Claim 14a.
Hence, the generating sequence ${\cal S}'$ 
involves no triangle identification with $\overline{C_6}$.

Altogether, it follows that each $G'_i$ in ${\cal S}'$ is isomorphic to $K_3$,
which implies that $k=p+1$ is odd.\\[3mm]
{\bf Claim 14b.} {\it $y_0$ is adjacent to each vertex in $\{ y_1,y_3,\ldots,y_{p-1}\}$.}
\begin{proof}[Proof of Claim 14b.]
Suppose, for a contradiction, 
that $y_0$ is not adjacent to $y_j$ for some odd index $j$ in $[p]$.
Since $y_0$ is adjacent to $y_1$ and $y_{p-1}=x$,
this implies $p\geq 6$.
Choosing $j$ as the smallest odd index such that $y_0$ is not adjacent to $y_j$,
we obtain that the vertices in $\{ y_1,y_3,\ldots,y_{j-2}\}$ are all adjacent to $y_0$.
Since $xy_0z_0$ is the only triangle in $G$ that contains $y_0$,
the vertex $y_{j-1}$ is adjacent to $z_1$ and non-adjacent to $y_0$.
Now, 
the set $X_k=\{ y_1,y_3,\ldots,y_{j-2}\}\cup \{ y_{j+1},y_{j+3},\ldots,y_p\}$
contradicts Claim 14a,
which completes the proof.
\end{proof}
Since $xy_0z_0$ is the only triangle in $G$ that contains $y_0$,
Claim 14b implies that 
$z_1$ is adjacent to each vertex in $\{ y_2,y_4,\ldots,y_p\}$.\\[3mm]
{\bf Claim 14c.} {\it $p=4$.}
\begin{proof}[Proof of Claim 14c.]
Suppose, for a contradiction, that $p\geq 6$.
Let the graph $G''$ arise from $G$ by identifying the vertices $z_0$ and $y_1$ 
to form a vertex $v''$.
Similarly, as for $G'$, it follows that $G''\in {\cal G}_{sc}$.
Nevertheless, the graph $G''$ contains an induced cycle 
$v''y_0y_{p-3}y_{p-2}z_1v''$ of length $5$, 
contradicting $G''\in {\cal G}_{sc}$.
\end{proof}
At this point, the subgraph of $G$ 
induced by $\{ x,y_0,z_0,y_1,z_1,y_2\}=\{ y_0,z_1\}\cup V(P)$
is isomorphic to $\overline{C_6}$ 
with the two triangles being $xy_0z_0$ and $y_1y_2z_1$.
Let $G'''$ arise from $G$ by identifying the vertices of $P:y_1y_2xz_0$ 
to form a vertex $v'''$.
The order of $G'''$ is $n-3$ and its size is at most $(2n-3)-7=2(n-3)-4$.
If $v'''$ is not the only cut vertex in $G'''$, then, by Corollary \ref{corollary1},
the graph $G'''$ has a stable cutset not containing $v'''$,
which is also a stable cutset in $G$.
Hence, it follows that $v'''$ is the only cut vertex in $G'''$.
Since $v$ is involved in every edge or triangle identification within ${\cal S}$,
it follows that all vertices added by the identifications with $G'_{k+1},\ldots,G'_{\ell}$
belong to the same component of $G'''$ as $y_0$ or $z_1$.
This implies that $G'''-v'''$ has exactly two components,
one component $C_0$ containing $y_0$ and 
the other component $C_1$ containing $z_1$.
Furthermore, for every $i\in [\ell]\setminus [k]$,
all vertices in $V(G'_i)\setminus V(G'_{\leq i-1})$ 
belong to either $C_0$ or $C_1$.
Since $G$ is not isomorphic to $\overline{C_6}$,
it follows that $\ell>k$.
In the graph $G'_{\leq k+1}$,
the set $X=V(G'_{\leq k})\cap V(G'_{k+1})$ is a $K_2$-cutset or a $K_3$-cutset.
In the subgraph of $G$ 
induced by $\{ x,y_0,z_0,y_1,z_1,y_2\}\cup \big(V(G'_{k+1})\setminus \{ v\}\big)$, 
the set $X$ is 
a $K_2$-cutset or
a $K_3$-cutset or
a $P_3$-cutset.
Furthermore, by the structural properties observed above,
the set $X$ is still a cutset in $G$,
which contradicts Claims 8 and 13.

This final contradiction completes the proof of Claim 14.
\end{proof}
At this point the entire proof can be finished exactly as in \cite{lepf}.
\end{proof}

\noindent {\bf Acknowledgement} 
We thank 
Van Bang Le (Universität Rostock)
and 
Florian Pfender (University of Colorado Denver)
for excellent discussions concerning Theorem \ref{theoremlepf} and its proof.
Since their result is just too beautiful to be false,
we are quite happy having closed the gap.
This work was partially supported by the Deutsche Forschungsgemeinschaft 
(DFG, German Research Foundation) -- project number 545935699.

\end{document}